\newcommand{\sg}{\textnormal{sg}}
\newtheorem{theorem}{Theorem}
\newtheorem{lemma}[theorem]{Lemma}
\newtheorem{corollary}[theorem]{Corollary}
\theoremstyle{remark}
\theoremstyle{definition}
\newtheorem{definition}[theorem]{Definition}
\numberwithin{theorem}{section} \numberwithin{equation}{section}
\numberwithin{example}{section}
\title{Ramanujan's $_{1}\psi_1$ summation, Hecke-type double sums, and Appell-Lerch sums}
\author{Eric Mortenson}
\begin{document}

\date{29 December 2011}

\subjclass[2000]{11B65, 11F11, 11F27}

\keywords{Hecke-type double sums, Appell-Lerch sums, mock theta functions, indefinite theta series}

\begin{abstract}
We use a specialization of Ramanujan's ${}_1\psi_1$ summation to give a new proof of a recent formula of Hickerson and Mortenson which expands a special family of Hecke-type double sums in terms of Appell-Lerch sums and theta functions.
\end{abstract}

\address{Department of Mathematics, The University of Queensland,
St Lucia, QLD 4072, Australia}
\email{mort@maths.uq.edu.au}
\maketitle
\setcounter{section}{-1}

\section{Notation}\label{section:notation}

 Let $q$ be a nonzero complex number with $|q|<1$ and define $\mathbb{C}^*:=\mathbb{C}-\{0\}$.  We recall notation for the theta function
\begin{gather*}
 j(x;q):=(x)_{\infty}(q/x)_{\infty}(q)_{\infty}=\sum_{n}(-1)^nq^{\binom{n}{2}}x^n,\\
 {\text{and }}\ \ j(x_1,x_2,\dots,x_n;q):=j(x_1;q)j(x_2;q)\cdots j(x_n;q).
\end{gather*}
where in the first line the equivalence of product and sum follows from Jacobi's triple product identity.  We will frequently use special cases of the above definition.  Here $a$ and $m$ are integers with $m$ positive.  We define
\begin{gather*}
J_{a,m}:=j(q^a;q^m), \ \ \overline{J}_{a,m}:=j(-q^a;q^m), \ {\text{and }}J_m:=J_{m,3m}=\prod_{n\ge 1}(1-q^{mn}).
\end{gather*}
We will also use standard basic hypergeometric series notation for finite and infinite products, see \cite{GR}.

\section{Introduction}\label{section:intro}

In his last letter to Hardy, Ramanujan gave a list of seventeen functions which he called ``mock $\vartheta$-functions.'' Ramanujan's list was divided into four groups of functions which he described as being of orders $3$, $5$, $5$, and $7$, with each function defined as a $q$-series convergent for $|q|<1.$  Specifically, he defined the functions in terms of Eulerian forms, i.e. $q$-hypergeometric series.  For example, the fifth order mock $\vartheta$-function $f_0(q)$ reads
\begin{align*}
f_0(q)&:=\sum_{n= 0}^{\infty}\frac{q^{n^2}}{(-q)_n}=1+\frac{q}{(1+q)}+\frac{q^4}{(1+q)(1+q^2)}+\cdots.
\end{align*}
Ramanujan also stated that the mock $\vartheta$-functions have asymptotic properties as $q$ approaches a root of unity, similar to those of ordinary theta functions, but that they are not theta functions.

Historically, problems for the mock $\vartheta$-functions have involved determining asymptotic properties of the Fourier coefficients, proving identities between the mock $\vartheta$-functions, and determining the modularity properties.  Eulerian forms are difficult to work with, so progress on the classical problems was not made until after techniques had been introduced which converted the Eulerian forms into alternate forms such as Appell-Lerch sums, Hecke-type double sums, and Fourier coefficients of meromorphic Jacobi forms.  However, the techniques employed were not robust.

To prove identities between the third order functions and to find their modularity properties, Watson used a formula from basic hypergeometric series to convert the Eulerian forms to what were essentially Appell-Lerch sums \cite{W3}.  Watson's techniques did not work for fifth and seventh order functions, so Andrews used a (then) little-known lemma of Bailey to convert the Eulerian forms of the fifth and seventh orders into Hecke-type double sums \cite{A}.  Hickerson then used the constant term method to convert from the Hecke-type forms to what were again essentially Appell-Lerch sums.  As a result, Hickerson was able to prove the mock theta conjectures for the fifth orders \cite{H5} and the analogous identities for the seventh orders \cite{H7}.  We recall that the mock theta conjectures and the analogous identities for the seventh order functions are identities which express Eulerian forms in terms of the universal mock $\vartheta$-function
\begin{equation*}
g(x,q):=x^{-1}\Big ( -1 +\sum_{n=0}^{\infty}\frac{q^{n^2}}{(x)_{n+1}(q/x)_{n}} \Big ).
\end{equation*}
For example, the mock theta conjecture for the fifth order $f_0(q)$ reads 
\begin{align*}
f_0(q)&:=\sum_{n= 0}^{\infty}\frac{q^{n^2}}{(-q)_n}=\frac{J_{5,10}J_{2,5}}{J_1}-2q^2g(q^2,q^{10}).
\end{align*}
In a breakthrough result \cite{Zw}, Zwegers showed that Appell-Lerch sums, Hecke-type double sums, and Fourier coefficients of meromorphic Jacobi forms all exhibit the same near-modular behaviour.  Among other applications, Zwegers' result gives a robust method for proving identities between mock $\vartheta$-functions \cite{Za,F}.

In joint work with Dean Hickerson, the author found and proved a master formula which expands a family of Hecke-type double sums in terms of Appell-Lerch sums and theta functions \cite[Theorem $1.6$]{HM}.  In order to state the expansion formula of the special family of Hecke-type double sums, we need to define some terms.   We will use the following definition of an Appell-Lerch sum:
\begin{definition}  \label{definition:mdef} Let $x,z\in\mathbb{C}^*$ with neither $z$ nor $xz$ an integral power of $q$. Then
\begin{equation}
m(x,q,z):=\frac{1}{j(z;q)}\sum_{r=-\infty}^{\infty}\frac{(-1)^rq^{\binom{r}{2}}z^r}{1-q^{r-1}xz}.\label{equation:mdef-eq}
\end{equation}
\end{definition}

\noindent Our definition for the building block of Hecke-type double sums reads 
\begin{definition} \label{definition:fabc-def}  Let $x,y\in\mathbb{C}^*$. Then
\begin{equation}
f_{a,b,c}(x,y,q):=\sum_{\substack{\sg(r)=\sg(s)}} \sg(r)(-x)^r(-y)^sq^{a\binom{r}{2}+brs+c\binom{s}{2}},\label{definition:f-def}\\
\end{equation}
where $\sg(r):=1$ if $r\ge0$, and $\sg(r)=-1$ if $r<0$.
\end{definition}
\noindent We also define the following expression involving Appell-Lerch sums:
\begin{align}
g_{a,b,c}&(x,y,q)\notag\\
:=&\sum_{t=0}^{a-1}(-y)^tq^{c\binom{t}{2}}j(q^{bt}x;q^a)m\Big (-q^{a\binom{b+1}{2}-c\binom{a+1}{2}-t(b^2-ac)}\frac{(-y)^a}{(-x)^b},q^{a(b^2-ac)},-1\Big )\label{equation:mdef-2}\\
&+\sum_{t=0}^{c-1}(-x)^tq^{a\binom{t}{2}}j(q^{bt}y;q^c)m\Big (-q^{c\binom{b+1}{2}-a\binom{c+1}{2}-t(b^2-ac)}\frac{(-x)^c}{(-y)^b},q^{c(b^2-ac)},-1\Big ).\notag
\end{align}
We can now state the formula of \cite{HM} which expands a special family of Hecke-type double sums in terms of Appell-Lerch sums and theta functions.
\begin{theorem}[\cite{HM}, Theorem $1.6$]   \label{theorem:masterFnp} Let $n$ and $p$ be positive integers with $(n,p)=1$.  For generic $x,y\in \mathbb{C}^*$
\begin{align*}
f_{n,n+p,n}(x,y,q)=g_{n,n+p,n}(x,y,q)+\theta_{n,p}(x,y,q),
\end{align*}
where
\begin{align*}
&\theta_{n,p}(x,y,q):=\sum_{r^*=0}^{p-1}\sum_{s^*=0}^{p-1}q^{n\binom{r-(n-1)/2}{2}+(n+p)\big (r-(n-1)/2\big )\big (s+(n+1)/2\big )+n\binom{s+(n+1)/2}{2}} (-x)^{r-(n-1)/2}\\
& 
 \cdot \frac{(-y)^{s+(n+1)/2}J_{p^2(2n+p)}^3j(-q^{np(s-r)}x^n/y^n;q^{np^2})j(q^{p(2n+p)(r+s)+p(n+p)}x^py^p;q^{p^2(2n+p)})}{\overline{J}_{0,np(2n+p)}j(q^{p(2n+p)r+p(n+p)/2}(-y)^{n+p}/(-x)^n,q^{p(2n+p)s+p(n+p)/2}(-x)^{n+p}/(-y)^n;q^{p^2(2n+p)})}.\notag
\end{align*}
Here $r:=r^*+\{(n-1)/2\}$ and $s:=s^*+\{ (n-1)/2\}$, with $0\le \{ \alpha\}<1$ denoting the fractional part of $\alpha$. 
\end{theorem}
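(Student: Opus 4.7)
My plan is to begin with the definition of $f_{n,n+p,n}(x,y,q)$ and manipulate the double sum via a decomposition into residue classes combined with a specialization of Ramanujan's ${}_1\psi_1$ summation. The key specialization I will use is
\[
\sum_{r=-\infty}^{\infty} \frac{z^r}{1-aq^r}=\frac{J_1^3\, j(az;q)}{j(a;q)\, j(z;q)},
\]
which is derived from Ramanujan's ${}_1\psi_1$ summation
\[
\sum_{n=-\infty}^{\infty} \frac{(a;q)_n}{(b;q)_n}z^n=\frac{(q;q)_\infty (b/a;q)_\infty (az;q)_\infty (q/(az);q)_\infty}{(b;q)_\infty (q/a;q)_\infty (z;q)_\infty (b/(az);q)_\infty}
\]
by setting $b=aq$, simplifying $(a;q)_n/(aq;q)_n=(1-a)/(1-aq^n)$, and rewriting the infinite-product answer in terms of theta functions. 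Applied with $q$ replaced by a suitable power such as $q^{np(2n+p)}$ or $q^{p^2(2n+p)}$, this identity converts bilateral single sums that will emerge after rearrangement into quotients of theta functions.

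First I would split the summation indices of $f_{n,n+p,n}(x,y,q)$ into residue classes modulo $p$, writing $r=pR+r^*$ and $s=pS+s^*$ with $0\le r^*,s^*<p$. This produces a $p^2$-fold outer sum over $(r^*,s^*)$ whose inner double sum has the same Hecke shape but in base $q^{p^2}$, since the quadratic form becomes $np^2\binom{R}{2}+(n+p)p^2\,RS+np^2\binom{S}{2}$ plus linear corrections in $R,S$ whose coefficients depend on $(r^*,s^*)$. The $q$-bases $q^{p^2(2n+p)}$ and $q^{np^2}$ that appear inside $\theta_{n,p}$ are exactly the periods produced by this split, which motivates the choice of modulus.

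Next I would extend one of the two indices in each inner sum to a bilateral range (at the cost of boundary terms) and evaluate the resulting bilateral sum via the ${}_1\psi_1$ specialization above. This step produces the ratios of theta functions of the form $j(-q^{np(s-r)}x^n/y^n;q^{np^2})$ and $j(q^{p(2n+p)(r+s)+p(n+p)}x^py^p;q^{p^2(2n+p)})$ that appear in $\theta_{n,p}$, together with a remaining single sum. Recognizing the remaining single sum through the definition
\[
m(x,q,z)=\frac{1}{j(z;q)}\sum_r \frac{(-1)^r q^{\binom{r}{2}} z^r}{1-q^{r-1}xz}
\]
identifies it as a combination of the Appell-Lerch sums appearing in $g_{n,n+p,n}(x,y,q)$. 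The two $t$-sums for $t\in\{0,\ldots,n-1\}$ in the definition of $g_{n,n+p,n}$ will arise from a secondary residue reduction modulo $n$ that separates the two natural $m$-function contributions (one for each of the symmetric roles of $x$ and $y$), while the shifts by $\{(n-1)/2\}$ in the index ranges of $\theta_{n,p}$ emerge from centering the residue classes so that the quadratic exponent is symmetric.

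The main obstacle will be the intricate bookkeeping needed to match the two pieces precisely with the stated formula: verifying that the quadratic exponent $n\binom{r-(n-1)/2}{2}+(n+p)(r-(n-1)/2)(s+(n+1)/2)+n\binom{s+(n+1)/2}{2}$ and the accompanying sign factors $(-x)^{r-(n-1)/2}(-y)^{s+(n+1)/2}$ in $\theta_{n,p}$ emerge naturally from the ${}_1\psi_1$ step with the correct powers of $q$, $x$, $y$. A secondary technical issue is that Ramanujan's ${}_1\psi_1$ converges only in the annulus $|b/a|<|z|<1$, so the manipulations should first be carried out for $x,y$ in a suitable generic region and then extended to the full domain stated in the theorem by analytic continuation.
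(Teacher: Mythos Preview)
Your proposal correctly singles out the ${}_1\psi_1$ summation as the central tool and correctly anticipates that one must work first in an annulus of absolute convergence and then continue analytically, but the key mechanical step does not work as you describe: the direction in which you invoke ${}_1\psi_1$ is essentially the reverse of what is actually feasible here.

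After you reduce $r,s$ modulo $p$, the inner sum over $(R,S)$ is still a Hecke-type sum of exactly the same shape as $f_{n,n+p,n}$, now in base $q^{p^2}$ with shifted arguments. Its exponent $np^2\binom{R}{2}+(n+p)p^2RS+np^2\binom{S}{2}$ has nonzero diagonal pieces, so it is \emph{not} of the purely bilinear form $q^{RS}X^RY^S$ that the ${}_1\psi_1$ corollary
\[
\sum_{r,s}\sg(r,s)q^{rs}x^ry^s=\frac{J_1^3\, j(xy;q)}{j(x;q)j(y;q)}
\]
evaluates. ``Extending one index bilaterally'' gives a theta function depending on the other index, leaving a sum of the type $\sum_S (-Y)^S q^{np^2\binom{S}{2}} j(\,\cdot\, q^{(n+p)p^2S};q^{np^2})$, which is not an Appell--Lerch sum in any recognizable way; and the boundary terms you would incur in that extension are themselves Hecke-type sums of the same complexity as the one you started with. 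In short, reducing modulo $p$ does not by itself put $f$ into a form that ${}_1\psi_1$ can close, and your sketch does not supply the missing mechanism.

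The paper runs the argument in the opposite direction. One multiplies the claimed identity through by $\overline{J}_{0,np(2n+p)}$ and expands each of $f$, $g$, and $\theta_{n,p}$ separately as a multiple $q$-series, then matches them. For $\theta_{n,p}$, the quotient $J_{p^2(2n+p)}^3\, j(\cdot)/\bigl(j(\cdot)j(\cdot)\bigr)$ in base $q^{p^2(2n+p)}$ is expanded \emph{into} a $\sg(t,u)$ double sum via the ${}_1\psi_1$ corollary, and the remaining factor $j(-q^{np(s-r)}x^n/y^n;q^{np^2})$ is expanded as an ordinary theta series; two changes of variable collapse everything to a triple sum with a $\sg$ coefficient. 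On the other side, each $m(\cdot,\cdot,-1)$ occurring in $g_{n,n+p,n}$ is not ``recognized'' inside a sum but rather expanded directly by geometric series into $\sum_{v,s}\sg(v,s)q^{\binom{v+1}{2}+vs}(-x)^s$ (or a shifted variant, depending on which half of the annulus applies). A secondary split modulo $n$ and a handful of elementary sign identities such as $\sg(nr+k+nw+\lfloor n/2\rfloor)=-\sg(-w-1-r)$ then align the $f-g$ series with the $\theta_{n,p}$ series term by term.
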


 Not only does Theorem \ref{theorem:masterFnp} give a robust method for proving identities between mock $\vartheta$-functions, but it also gives a robust method for converting Hecke-type double sums into Appell-Lerch sums.  Moreover, the formula also finds mock theta conjecture-like identities.  In recent work \cite{A2}, Andrews expands special families of Eulerian forms in terms of $q$-orthogonal polynomials.  His expansion formulas simultaneously prove identities of Rogers-Ramanujan type as well as Hecke-type double sum expansions for mock $\vartheta$-functions.  For example, Andrews' formula \cite[$(1.19)$]{A2} proves Slater's identity \cite[$(39)$]{S}
 \begin{align*}
\sum_{n=0}^{\infty}\frac{q^{2n^2}}{(q;q)_{2n}}=\frac{\overline{J}_{3,8}}{J_2},
\end{align*}
and also proves a Hecke-type double sum expansion for a new mock $\vartheta$-function \cite[$(1.14)$]{A2}
\begin{align*}
\sum_{n=0}^{\infty}\frac{q^{2n^2}}{(-q;q)_{2n}}&=\frac{1}{(q^2;q^2)_{\infty}}\sum_{n=0}^{\infty}q^{4n^2+n}(1-q^{6n+3})\sum_{j=-n}^{n}(-1)^jq^{-j^2}.
\end{align*}
Using Theorem \ref{theorem:masterFnp}, the author showed the following new mock theta conjecture-like identity \cite{M}:
\begin{align*}
\sum_{n=0}^{\infty}\frac{q^{2n^2}}{(-q;q)_{2n}}=2-2qg(-q,q^{8})-\frac{J_{1,2}\overline{J}_{3,8}}{J_2}.
\end{align*}
In \cite{M}, the author also used Theorem \ref{theorem:masterFnp} to find new mock theta conjecture-like identities for another new mock $\vartheta$-function of Andrews \cite{A2} as well as for two new mock $\vartheta$-functions of Bringmann, et al, \cite{BHL}.  

Theorem \ref{theorem:masterFnp} can also be used to find more exotic identities.  In other recent work of Andrews \cite{A3}, mock $\vartheta$-functions are obtained from identities of Rogers-Ramanujan type by interchanging the role of evens and odds in the partition theoretic interpretations of the Eulerian forms.  Andrews thus found the following Hecke-type double sum expansion \cite[$(4.25)$]{A3}, which is related to mock theta functions:
\begin{align*}
\sum_{n= 0}^{\infty}\frac{q^{3n^2+2n}}{(q)_{2n}(-q^2;q^2)_n}
&=\frac{1}{(q^2;q^2)_{\infty}}\sum_{n=0}^{\infty}q^{4n^2+2n}(1-q^{4n+2})\sum_{j=-n}^n(-1)^j(-q)^{-j(3j-1)/2}. 
\end{align*}
Using Theorem \ref{theorem:masterFnp} as a guide, we are quickly led to
\begin{equation}
\sum_{n=0}^{\infty}\frac{q^{3n^2+2n}}{(q)_{2n}(-q^2;q^2)_n}
=-q^2g(q^2,q^{10})\cdot \frac{j(-q;-q^5)}{J_2}+q^3g(q^4,q^{10})\cdot \frac{j(q^2;-q^5)}{J_2}+\frac{j(-q^5;-q^{15})^3}{J_2J_{10}},
\end{equation}
which holds numerically.

Theorem \ref{theorem:masterFnp} was originally shown in \cite{HM} by computing functional equations and then checking poles and residues.  In this note we give a new proof of Theorem \ref{theorem:masterFnp}, that uses a special case of Ramanujan's ${}_1\psi_1$ summation.  The new proof sheds light on the structure of the $\theta_{n,p}$ expression and demonstrates an amount of control over the number of terms involved.  In Section \ref{section:prelim} we recall the special case of Ramanujan's $_{1}\psi_1$ summation and prove some technical results needed for the new proof of Theorem \ref{theorem:masterFnp}, which we carry out in Section \ref{section:proofofthm}.


\section{Preliminaries}\label{section:prelim}

 We first recall Ramanujan's ${}_1\psi_1$ summation formula.
\begin{theorem} If $|{b}/{a}|<|x|<1$, then 
\begin{equation}
\sum_{n=-\infty}^{\infty}\frac{(a)_n}{(b)_n}x^n=\frac{(b/a,q/ax,ax,q)_{\infty}}{(b,b/ax,q/a,x)_{\infty}}. \label{equation:1psi1}
\end{equation}
\end{theorem}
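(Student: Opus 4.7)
The plan is to prove \eqref{equation:1psi1} by analytic continuation in $b$, holding $a$, $x$, $q$ fixed. The strategy is to verify the identity at the sequence of special values $b=q^{m+1}$, $m=0,1,2,\ldots$, which accumulates at $b=0$, and then invoke the identity theorem on a disk where both sides are holomorphic in $b$.

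First, under the hypothesis $|b/a|<|x|<1$, one checks by splitting the bilateral sum into its $n\ge 0$ and $n<0$ parts and using the standard conversion
\[
(c;q)_{-k}=\frac{(-q/c)^k q^{\binom{k}{2}}}{(q/c;q)_k}
\]
that the tail for $n<0$ behaves like $(b/(ax))^k$, so the sum converges absolutely and defines an analytic function of $b$ in the disk $|b|<|ax|$. The right-hand side of \eqref{equation:1psi1} is manifestly meromorphic in $b$, and its potential poles (coming from $(b;q)_\infty$ and $(b/ax;q)_\infty$ in the denominator) all lie outside this disk.

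Next, at $b=q^{m+1}$ the factor $1/(b;q)_n$ vanishes for $n\le-(m+1)$, so the bilateral sum truncates to $\sum_{n\ge-m}(a)_n x^n/(q^{m+1})_n$. After shifting the summation index by $m$ and rewriting the remaining negative-index Pochhammers by the conversion formula above, the truncated sum becomes a unilateral series that can be evaluated in closed form by the $q$-binomial theorem
\[
\sum_{k=0}^{\infty}\frac{(a;q)_k}{(q;q)_k}x^k=\frac{(ax;q)_\infty}{(x;q)_\infty}.
\]
Simultaneously, the right-hand side of \eqref{equation:1psi1} at $b=q^{m+1}$ simplifies via $(q^{m+1};q)_\infty=(q;q)_\infty/(q;q)_m$ together with analogous cancellations in the factors $(b/a;q)_\infty$ and $(b/ax;q)_\infty$; the two closed forms then need to be shown to coincide.

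The points $b=q^{m+1}$ lie in $|b|<|ax|$ for $m$ sufficiently large and accumulate at $0$, so the identity theorem forces equality throughout the disk, and hence wherever both sides are simultaneously defined. The main obstacle is the bookkeeping in the closed-form evaluation at $b=q^{m+1}$: the reindexing, together with the signs and $q^{\binom{k}{2}}$ factors produced when negative-index Pochhammers are converted to positive-index ones, must be tracked carefully so that the resulting expression matches the specialization of the infinite product on the right.
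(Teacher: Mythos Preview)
The paper does not prove this theorem at all: it is stated as a recalled classical result (``We first recall Ramanujan's ${}_1\psi_1$ summation formula'') and then immediately specialized to Corollary~\ref{theorem:H5Thm1.5}, which is the only form actually used in the rest of the argument. So there is no proof in the paper to compare against.

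Your proposed argument is the standard Andrews--Askey proof (indeed reference \cite{AA} in the paper's bibliography): verify the identity at $b=q^{m+1}$ by truncating the bilateral sum, shifting the index, and reducing to the $q$-binomial theorem, then invoke analyticity in $b$. This is correct in outline and is one of the cleanest known proofs. The only caveat is that your write-up stops short of actually carrying out the ``main obstacle'' you flag: you assert that after reindexing and converting negative Pochhammers the two sides match, but you do not exhibit the computation. If this is meant to be a complete proof rather than a sketch, you need to write out the evaluation at $b=q^{m+1}$ explicitly and show the product sides agree; otherwise what you have is a (correct) plan rather than a proof.
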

\noindent We will use the following corollary which follows by setting $a=y$ and $b=qy$.  See, for example \cite[Theorem $1.5$]{H5}.
\begin{corollary} \label{theorem:H5Thm1.5}For $|q|<|x|<1$ and $|q|<|y|<1$,
\begin{equation}
\sum_{r,s}\textup{sg}(r,s)q^{rs}x^ry^s=\frac{J_1^3j(xy;q)}{j(x;q)j(y;q)},\label{equation:ThmH51.5}
\end{equation}
where $\sg(r,s):=\Big ( \sg(r)+\sg(s)\Big )/2.$

\end{corollary}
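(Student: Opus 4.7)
The plan is to deduce the corollary directly from Ramanujan's $_1\psi_1$ identity by the suggested substitution $a=y$, $b=qy$, and then expand both sides to match the Hecke-type double sum.

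First I would apply \eqref{equation:1psi1} with $a=y$, $b=qy$. The ratio of $q$-Pochhammer symbols collapses: using $(z;q)_n=(z;q)_\infty/(zq^n;q)_\infty$ (valid for all integers $n$), one gets $(y)_n/(qy)_n=(1-y)/(1-q^n y)$. The hypothesis $|b/a|=|q|<|x|<1$ is exactly the first convergence constraint of the corollary. The left side of \eqref{equation:1psi1} thus becomes $(1-y)\sum_{n\in\Z} x^n/(1-q^n y)$.

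Next I would simplify the right side of \eqref{equation:1psi1}. Substituting gives
\[
\frac{(q,\,q/(xy),\,xy,\,q;q)_\infty}{(qy,\,q/x,\,q/y,\,x;q)_\infty}.
\]
Using $(y;q)_\infty=(1-y)(qy;q)_\infty$ in the denominator and grouping with the definition $j(z;q)=(z;q)_\infty(q/z;q)_\infty(q;q)_\infty$, the numerator contributes $(q;q)_\infty\cdot j(xy;q)$, while the denominator becomes $(1-y)^{-1} j(x;q)j(y;q)/(q;q)_\infty^2$. Canceling the factor $(1-y)$ that appears on both sides, I obtain
\[
\sum_{n\in\Z}\frac{x^n}{1-q^n y}=\frac{J_1^3\,j(xy;q)}{j(x;q)j(y;q)}.
\]

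The remaining task is to show that the left side equals $\sum_{r,s}\sg(r,s)q^{rs}x^ry^s$. Here I split the sum over $n$ according to whether $n\ge 0$ or $n<0$ and expand $1/(1-q^n y)$ as a geometric series in a direction chosen for convergence. For $n\ge 0$, since $|q^n y|\le|y|<1$, we expand $1/(1-q^n y)=\sum_{s\ge 0}q^{ns}y^s$. For $n<0$, say $n=-m$ with $m\ge 1$, we have $|q^{-m}y|=|y|/|q|^m>1$ because $|q|<|y|$, so $1/(1-q^{-m}y)=-\sum_{s\ge 1}q^{ms}y^{-s}=-\sum_{s\le -1}q^{ns}y^s$. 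Assembling the two cases produces precisely
\[
\sum_{n\ge 0,\,s\ge 0}q^{ns}x^ny^s-\sum_{n<0,\,s<0}q^{ns}x^ny^s,
\]
and since $\sg(r,s)$ equals $+1$ when both indices are nonnegative, $-1$ when both are negative, and $0$ otherwise, this is exactly $\sum_{r,s}\sg(r,s)q^{rs}x^ry^s$.

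The only subtle step is the double-sum manipulation: I would justify the interchange of summation (and hence the splitting argument) by noting that the double series is absolutely convergent in the prescribed bidisk $|q|<|x|<1$, $|q|<|y|<1$, which makes all rearrangements legitimate. The rest is bookkeeping with theta-function identities, so I expect no real obstacle beyond the careful handling of the $n<0$ expansion.
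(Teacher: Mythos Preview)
Your argument is correct and is precisely the approach the paper indicates: specialize $a=y$, $b=qy$ in \eqref{equation:1psi1}, simplify the product side into $J_1^3 j(xy;q)/\bigl(j(x;q)j(y;q)\bigr)$, and expand each term $x^n/(1-q^n y)$ as a geometric series in the direction dictated by the annuli $|q|<|x|<1$ and $|q|<|y|<1$. The bookkeeping with the Pochhammer and theta factors, and the identification of the two half-sums with $\sg(r,s)$, are all handled correctly.
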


We now derive some technical results, which will be used in the proof of Theorem \ref{theorem:masterFnp}.  The technical results will allow us to expand Appell-Lerch sums and quotients of theta functions in terms of multiple sums of $q$-series, i.e. sums like the left-hand side of (\ref{equation:ThmH51.5}).  The following two lemmas are for the $n$ odd case of the proof of Theorem \ref{theorem:masterFnp}.  The $n$ even versions are similar and will be omitted.

\begin{lemma} \label{lemma:use1psi1} Let $x,y\in \mathbb{C}^*$, $n$ and $p$ positive integers with $n$ odd, and $r$ an integer with $0\le r \le p-1$.  If
\begin{equation*}
|q^{p(3n+p)/2}|<|x^{-n}y^{n+p}|<|q^{-p(n+p)/2}|
\end{equation*}
then
\begin{equation*}
|q^{p^2(2n+p)}|<|q^{p(n+p)/2+p(2n+p)r}x^{-n}y^{n+p}|<1.
\end{equation*}
\end{lemma}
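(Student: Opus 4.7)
The lemma is a purely elementary statement about real exponents of $|q|$, so my plan is to strip it down to a chain of inequalities and observe that the conditions $r\geq 0$ and $r\leq p-1$ line up exactly with the two halves of the desired conclusion.

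First I would write $Q:=|q|\in(0,1)$ and $A:=|x^{-n}y^{n+p}|$, so that the hypothesis becomes
\begin{equation*}
Q^{p(3n+p)/2}<A<Q^{-p(n+p)/2},
\end{equation*}
and the conclusion is equivalent (after dividing through by the positive quantity $Q^{p(n+p)/2+p(2n+p)r}$) to the assertion
\begin{equation*}
Q^{p^2(2n+p)-p(n+p)/2-p(2n+p)r}<A<Q^{-p(n+p)/2-p(2n+p)r}.
\end{equation*}
Since $Q<1$, the inequality $Q^{\alpha}<Q^{\beta}$ is equivalent to $\alpha>\beta$, so everything reduces to comparing exponents.

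For the upper bound I would use the assumption $r\geq 0$: then $-p(n+p)/2-p(2n+p)r\leq -p(n+p)/2$, and because $Q<1$ this gives
\begin{equation*}
Q^{-p(n+p)/2-p(2n+p)r}\geq Q^{-p(n+p)/2}>A,
\end{equation*}
by hypothesis. For the lower bound I would use $r\leq p-1$ and compute
\begin{equation*}
p^{2}(2n+p)-\tfrac{p(n+p)}{2}-p(2n+p)r \geq p^{2}(2n+p)-\tfrac{p(n+p)}{2}-p(2n+p)(p-1)=\tfrac{p(3n+p)}{2}.
\end{equation*}
Again, since $Q<1$, this yields $Q^{p^{2}(2n+p)-p(n+p)/2-p(2n+p)r}\leq Q^{p(3n+p)/2}<A$, using the lower half of the hypothesis.

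There is really no obstacle here beyond careful arithmetic: the two endpoint values $r=0$ and $r=p-1$ are precisely what make the upper and lower bounds in the hypothesis match the upper and lower bounds in the conclusion, which is presumably why the range $0\leq r\leq p-1$ is the natural one appearing in the definition of $\theta_{n,p}$. The oddness of $n$ plays no role in the inequality itself; it is stated here only to keep us in the case of Theorem 1.1 that will be proved subsequently. I would conclude by noting that the analogous estimate for $n$ even follows by the same computation after the corresponding shift by the fractional part $\{(n-1)/2\}$.
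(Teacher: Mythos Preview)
Your argument is correct and is essentially the same as the paper's: both proofs use $r\ge 0$ to get the upper bound and $r\le p-1$ to get the lower bound, reducing everything to the elementary exponent comparison $p(n+p)/2+p(3n+p)/2+p(2n+p)(p-1)=p^{2}(2n+p)$. The paper phrases this by inserting the harmless factors $|q^{-p(2n+p)r}|\ge 1$ and $|q^{p(2n+p)(p-1-r)}|\le 1$ into the hypothesis, which is exactly your exponent inequality in multiplicative form.
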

\begin{proof}[Proof of Lemma \ref{lemma:use1psi1}]  We have that
\begin{equation}
|x^{-n}y^{n+p}|<|q^{-p(n+p)/2}|\cdot |q^{-p(2n+p)r}| \label{equation:use1psi1-a}
\end{equation}
and
\begin{equation}
|x^{-n}y^{n+p}|>|q^{p(3n+p)/2}|\cdot |q^{p(2n+p)(p-1-r)}|.\label{equation:use1psi1-b}
\end{equation}
Combining (\ref{equation:use1psi1-a}) and (\ref{equation:use1psi1-b}) gives the result.
\end{proof}

\begin{lemma} \label{lemma:mxqz-expansion-conditions}Let $x,y\in \mathbb{C}^*$, $n$ and $p$ positive integers with $n$ odd, and $k$ an integer with $0\le k \le n-1$.  If
\begin{equation*}
|q^{p(3n+p)/2}|<|x^{-n}y^{n+p}|<|q^{-p(n+p)/2}|
\end{equation*}
then for $0\le k \le (n-1)/2$ we have
\begin{equation*}
|q^{np(2n+p)}|<|q^{n(np+\binom{p+1}{2})-kp(2n+p)}x^{n}y^{-n-p}|<1,
\end{equation*}
and for  $(n+1)/2 \le k \le n-1$ we have
\begin{equation*}
|q^{np(2n+p)}|<|q^{n(np+\binom{p+1}{2})+np(2n+p)-kp(2n+p)}x^{n}y^{-n-p}|<1.
\end{equation*}
\end{lemma}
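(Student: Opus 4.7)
This is a routine exponent-bookkeeping argument, parallel in structure to Lemma~\ref{lemma:use1psi1}. The first step is to invert the hypothesis to obtain
\begin{equation*}
|q^{p(n+p)/2}| < |x^n y^{-n-p}| < |q^{-p(3n+p)/2}|,
\end{equation*}
and to record the key algebraic simplification
\begin{equation*}
n\Big(np + \binom{p+1}{2}\Big) = \frac{np(2n+p)}{2} + \frac{np}{2},
\end{equation*}
together with the factorization $n(2n+p) - 2n - p = (n-1)(2n+p)$, which will make everything collapse cleanly.

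For the range $0 \le k \le (n-1)/2$, I would multiply the inverted hypothesis through by the positive real number $|q^{n(np+\binom{p+1}{2}) - kp(2n+p)}|$, which preserves strict inequalities. Recalling that $|q|<1$ reverses the correspondence between exponent and modulus, the desired conclusion
\begin{equation*}
|q^{np(2n+p)}| < \big| q^{n(np+\binom{p+1}{2}) - kp(2n+p)}\, x^n y^{-n-p}\big| < 1
\end{equation*}
reduces to the two exponent inequalities $n(np+\binom{p+1}{2}) - kp(2n+p) \ge p(3n+p)/2$ and $n(np+\binom{p+1}{2}) - kp(2n+p) \le np(2n+p) - p(n+p)/2$. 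Using the two simplifications above, these collapse respectively to
\begin{equation*}
p(2n+p)\big[(n-1)/2 - k\big] \ge 0 \quad\text{and}\quad n + 2k \ge 1,
\end{equation*}
the first of which is exactly the case hypothesis on $k$, and the second is automatic from $n \ge 1$.

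For the range $(n+1)/2 \le k \le n-1$, I would instead multiply the inverted hypothesis through by $|q^{n(np+\binom{p+1}{2}) + np(2n+p) - kp(2n+p)}|$. The analogous reductions yield $n - k \ge -(n-1)/2$ (automatic from $k \le n-1$) and $k \ge (n+1)/2$ (the case hypothesis). Strictness is preserved throughout because the inverted hypothesis is strict. The main obstacle, such as it is, is algebraic vigilance: one must track inequality directions carefully under $|q|<1$ and recognize the pivotal factorization $(n-1)(2n+p)$, after which the two cases reduce to verifying a single linear inequality each.
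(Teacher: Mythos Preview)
Your proof is correct and is essentially the same exponent-bookkeeping argument as the paper's. The only organizational difference is that the paper first packages the computation by observing (exactly as in Lemma~\ref{lemma:use1psi1}, with $n$ playing the role of $p$) that
\[
|q^{np(2n+p)}|<\big|q^{p(3n+p)/2+tp(2n+p)}x^{n}y^{-n-p}\big|<1
\]
for every integer $t$ with $0\le t\le n-1$, and then simply substitutes $t=(n-1)/2-k$ in the first range and $t=n+(n-1)/2-k$ in the second; your direct verification of the two exponent inequalities in each range is the unpacked version of that substitution, and your factor $p(2n+p)\big[(n-1)/2-k\big]$ is exactly what identifies the correct value of $t$.
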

\begin{proof}[Proof of Lemma \ref{lemma:mxqz-expansion-conditions}]
Arguing as in the proof of Lemma \ref{lemma:use1psi1}, we obtain
\begin{equation*}
|q^{np(2n+p)}|<|q^{p(3n+p)/2+tp(2n+p)}x^ny^{-n-p}|<1,
\end{equation*}
where $t$ is an integer with $0\le t \le n-1.$  The lemma then follows by setting
\begin{equation*}
t=(n-1)/2 -k
\end{equation*}
when $0\le k \le (n-1)/2$ and by setting
\begin{equation*}
t=n+(n-1)/2 -k
\end{equation*}
when $(n+1)/2\le k \le n-1$.
\end{proof}

\begin{lemma} \label{lemma:mxqz-expansionA}If $|q|<|x|<1$, then
\begin{align}
&\overline{J}_{0,1}m(x,q,-1)=\sum_{v,s}\textup{sg}(v,s)q^{\binom{v+1}{2}+vs}(-x)^s.\label{equation:exp1}
\end{align}
\end{lemma}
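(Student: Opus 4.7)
The plan is to reduce the claim to a pair of geometric-series expansions, one for positive and one for non-positive indices of the Appell-Lerch sum.

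First, substituting $z=-1$ directly in Definition \ref{definition:mdef} and observing that $z^r=(-1)^r$ cancels the leading sign while $xz=-x$ converts the denominator, I would obtain
\begin{equation*}
\overline{J}_{0,1}\,m(x,q,-1)=\sum_{r\in\Z}\frac{q^{\binom{r}{2}}}{1+q^{r-1}x}.
\end{equation*}

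Next I would split the sum at $r=1$ and expand each denominator as a geometric series, using a different expansion in each half. For $r\ge 1$ the hypothesis $|x|<1$ gives $|q^{r-1}x|<1$, so
\begin{equation*}
\frac{1}{1+q^{r-1}x}=\sum_{s\ge 0}q^{(r-1)s}(-x)^s.
\end{equation*}
For $r\le 0$ the hypothesis $|q|<|x|$ gives $|q^{1-r}x^{-1}|<1$; rewriting $1/(1+q^{r-1}x)=q^{1-r}x^{-1}/(1+q^{1-r}x^{-1})$, expanding as a geometric series in $q^{1-r}x^{-1}$, and reindexing by $s\mapsto -s$ produces
\begin{equation*}
\frac{1}{1+q^{r-1}x}=-\sum_{s\le -1}q^{(r-1)s}(-x)^s.
\end{equation*}

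Finally I would substitute both expansions back, change variables $v=r-1$ so that $\binom{r}{2}$ becomes $\binom{v+1}{2}$ and $(r-1)s$ becomes $vs$, and collect terms. The $r\ge 1$ contribution yields $\sum_{v,s\ge 0}q^{\binom{v+1}{2}+vs}(-x)^s$ and the $r\le 0$ contribution yields $-\sum_{v,s\le -1}q^{\binom{v+1}{2}+vs}(-x)^s$. Since $\sg(v,s)$ equals $+1$ when $v,s\ge 0$, $-1$ when $v,s\le -1$, and $0$ in the mixed-sign quadrants, these two pieces reassemble exactly into $\sum_{v,s}\sg(v,s)q^{\binom{v+1}{2}+vs}(-x)^s$, as desired.

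The only analytic point requiring care is the absolute convergence of the two resulting double sums, which legitimizes the interchange of summation; this follows routinely from the two-sided bound $|q|<|x|<1$, since in each quadrant the geometric decay rates in both $v$ and $s$ are bounded away from $1$. This is the main, though essentially routine, technical obstacle; after absolute convergence is established the identification with $\sg(v,s)$ is purely formal.
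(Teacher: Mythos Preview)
Your proposal is correct and follows essentially the same route as the paper's proof: both substitute $z=-1$ into Definition~\ref{definition:mdef}, split the resulting sum according to whether $|q^{r-1}x|<1$ or $>1$, and expand each half as a geometric series. The only cosmetic difference is that the paper performs the index shift $r\mapsto r+1$ (your $v=r-1$) before the geometric expansion rather than after, and omits your explicit remark on absolute convergence.
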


\begin{proof}[Proof of Lemma \ref{lemma:mxqz-expansionA}] By Definition \ref{definition:mdef}, we have
\begin{align*}
\overline{J}_{0,1}m(x,q,-1)&=\sum_{k}\frac{(-1)^kq^{\binom{k}{2}}(-1)^k}{1-q^{k-1}(-x)}
=\sum_{k}\frac{q^{\binom{k+1}{2}}}{1-q^{k}(-x)}.
\end{align*}
Noting that $|q^kx|<1$ if and only if $k\ge 0$, the result then follows from the geometric series.
\end{proof}
\begin{lemma} \label{lemma:mxqz-expansionB}If $|q|<|qx|<1$, then
\begin{align}
&\overline{J}_{0,1}m(x,q,-1)=\sum_{v,s}\textup{sg}(v,s)q^{\binom{v+1}{2}+(v+1)(s+1)}(-x)^s.\label{equation:exp2}
\end{align}
\end{lemma}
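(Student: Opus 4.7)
The plan is to follow the same template as Lemma \ref{lemma:mxqz-expansionA}, but with the geometric-series split shifted by one unit because the hypothesis is now $|q|<|qx|<1$ (equivalently $1<|x|<|q|^{-1}$) rather than $|q|<|x|<1$.

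First, I would write, as in the proof of Lemma \ref{lemma:mxqz-expansionA},
\begin{equation*}
\overline{J}_{0,1}m(x,q,-1)=\sum_{k}\frac{q^{\binom{k+1}{2}}}{1-q^{k}(-x)}.
\end{equation*}
Under the new hypothesis, the inequality $|q^k x|<1$ now holds precisely for $k\ge 1$ (instead of $k\ge 0$). So I would expand the $k\ge 1$ terms as the geometric series $\sum_{s\ge 0}q^{ks}(-x)^s$, and the $k\le 0$ terms as $-\sum_{s\ge 1}q^{-ks}(-x)^{-s}$, obtaining
\begin{equation*}
\overline{J}_{0,1}m(x,q,-1)=\sum_{k\ge 1}\sum_{s\ge 0}q^{\binom{k+1}{2}+ks}(-x)^s-\sum_{k\le 0}\sum_{s\le -1}q^{\binom{k+1}{2}+ks}(-x)^{s},
\end{equation*}
after the substitution $s\mapsto -s$ in the second sum (using that $\binom{k+1}{2}$ is preserved in the obvious way).

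Next I would match the right-hand side of the target identity to this expression. Expanding the exponent in the statement gives $\binom{v+1}{2}+(v+1)(s+1)=\binom{v+2}{2}+(v+1)s$, so the substitution $v'=v+1$ converts the claimed sum into
\begin{equation*}
\sum_{v'\ge 1,\, s\ge 0}q^{\binom{v'+1}{2}+v's}(-x)^s\;-\;\sum_{v'\le 0,\, s\le -1}q^{\binom{v'+1}{2}+v's}(-x)^s,
\end{equation*}
which, with $v'=k$, is exactly the expression derived above. This completes the identification.

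I do not expect any serious obstacle: the only subtlety compared with Lemma \ref{lemma:mxqz-expansionA} is bookkeeping the shift $k\ge 1$ vs.\ $k\ge 0$ coming from the new hypothesis, and verifying that the algebraic reindexing $v=k-1$ in the exponent $\binom{v+1}{2}+(v+1)(s+1)$ reproduces $\binom{k+1}{2}+ks$. The absolute convergence needed to interchange the two summations is immediate from $|q|<|qx|<1$, which guarantees $|q^k x|\le |qx|<1$ for $k\ge 1$ and $|q^{-k}x^{-1}|\le |x|^{-1}<1$ for $k\le 0$.
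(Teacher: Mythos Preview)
Your proof is correct and is exactly the argument the paper has in mind: the paper's own proof simply reads ``This is similar to the proof of Lemma~\ref{lemma:mxqz-expansionA},'' and what you wrote is precisely that similar argument, with the geometric-series cutoff shifted from $k\ge 0$ to $k\ge 1$ and the reindexing $k=v+1$ accounting for the extra $(v+1)(s+1)$ in the exponent.
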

\begin{proof}[Proof of Lemma \ref{lemma:mxqz-expansionB}]  This is similar to the proof of Lemma  \ref{lemma:mxqz-expansionA}.
\end{proof}

\begin{lemma} \label{lemma:sign-ids} Let $n$ be a positive integer and $r$, $s$, $k$, $w$ integers.  Then 
\begin{equation*}
\textup{sg}(nr+k+nw+\lfloor n/2\rfloor )=
\begin{cases}
-\textup{sg}(-w-1-r) & {\text{ if } 0\le k \le \lfloor n/2\rfloor},\\
-\textup{sg}(-w-2-r) & {\text{ if } \lfloor n/2\rfloor +1\le k \le n-1},
\end{cases}
\end{equation*}
and
\begin{equation*}
\textup{sg}(ns+k-nw-\lfloor n/2\rfloor -1)=
\begin{cases}
-\textup{sg}(w-s) & {\text{ if } 0\le k \le \lfloor n/2\rfloor },\\
-\textup{sg}(w-1-s) & {\text{ if } \lfloor n/2\rfloor +1\le k \le n-1.}
\end{cases}
\end{equation*}
\end{lemma}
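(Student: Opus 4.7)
The plan is to reduce each of the two identities to a direct case analysis on the sign of a single integer combination. For the first identity, I would introduce the substitution $m := r + w$, which converts the argument of $\sg$ into $nm + k + \lfloor n/2\rfloor$. In the context of the paper's main theorem $n$ is odd (cf.\ Lemmas~\ref{lemma:use1psi1} and~\ref{lemma:mxqz-expansion-conditions}), so $\lfloor n/2\rfloor = (n-1)/2$, and the two candidate right-hand sides $-\sg(-w-1-r)$ and $-\sg(-w-2-r)$ become $-\sg(-m-1)$ and $-\sg(-m-2)$, which reduce respectively to the questions of whether $m \ge 0$ or $m \ge -1$.

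Next I would split the verification into the two $k$-ranges appearing in the statement. In the range $0 \le k \le (n-1)/2$, a one-line estimate shows that $nm + k + (n-1)/2$ is nonnegative exactly when $m \ge 0$: the ``$\Leftarrow$'' direction is immediate, and for the ``$\Rightarrow$'' direction one notes that $m \le -1$ forces the expression to be at most $-n + 2\cdot(n-1)/2 = -1$. In the range $(n+1)/2 \le k \le n-1$, the larger contribution from $k$ shifts the threshold: nonnegativity is now equivalent to $m \ge -1$, since at $m=-1$ the expression equals $k - (n+1)/2 \ge 0$, while $m \le -2$ makes it at most $-2n + (n-1) + (n-1)/2 < 0$. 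Matching these two dichotomies against $-\sg(-m-1)$ and $-\sg(-m-2)$ completes the first identity.

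The second identity would be handled in parallel after the substitution $m' := s - w$, which converts the argument of $\sg$ into $nm' + k - (n+1)/2$. The analogous four-case analysis yields thresholds $m' \ge 1$ and $m' \ge 0$ in the two $k$-ranges, and these match $-\sg(-m')$ and $-\sg(-m'-1)$ respectively. The only real obstacle is bookkeeping --- keeping track of the two $k$-ranges, the two identities, and the convention $\sg(0) = 1$ --- but every subcase reduces to a one-line integer inequality that relies solely on $\lfloor n/2\rfloor = (n-1)/2$, so this is more a routine verification than a substantive calculation.
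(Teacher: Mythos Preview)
Your proposal is correct and follows exactly the paper's approach: the paper's proof is the single sentence ``This is a straightforward result of the fact that $\sg(-1-r)=-\sg(r)$,'' and your case analysis simply spells this out. Your restriction to odd $n$ matches the only case in which the lemma is actually invoked in the paper.
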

\begin{proof}[Proof of Lemma \ref{lemma:sign-ids}] This is a straightforward result of the fact that $\sg(-1-r)=-\sg(r).$
\end{proof}


\section{proof of theorem \ref{theorem:masterFnp}}\label{section:proofofthm}

We prove the case $n$ odd; the case $n$ even is similar and will be omitted.  We rewrite the statement of Theorem \ref{theorem:masterFnp}:
\begin{align}
&\overline{J}_{0,np(2n+p)}\Big ( f_{n,n+p,n}(x,y,q)-g_{n,n+p,n}(x,y,q)\Big )\label{equation:nodd} \\
&=\sum_{r=0}^{p-1}\sum_{s=0}^{p-1}q^{n\binom{r-(n-1)/2}{2}+(n+p)\big (r-(n-1)/2\big )\big (s+(n+1)/2\big )+n\binom{s+(n+1)/2}{2}} (-x)^{r-(n-1)/2}(-y)^{s+(n+1)/2}\notag\\
& \ \ \
 \cdot \frac{J_{p^2(2n+p)}^3j(-q^{np(s-r)}x^n/y^n;q^{np^2})j(q^{p(2n+p)(r+s)+p(n+p)}x^py^p;q^{p^2(2n+p)})}{j(q^{p(2n+p)r+p(n+p)/2}(-y)^{n+p}/(-x)^n,q^{p(2n+p)s+p(n+p)/2}(-x)^{n+p}/(-y)^n;q^{p^2(2n+p)})}.\notag
\end{align}
It suffices to prove (\ref{equation:nodd}) in the case

\begin{equation*}
|q^{p(3n+p)/2}|<|x^{-n}y^{n+p}|<|q^{-p(n+p)/2}| \text{ and } |q^{p(3n+p)/2}|<|y^{-n}x^{n+p}|<|q^{-p(n+p)/2}|.
\end{equation*}
The result will then follow from analytic continuation.

 We rewrite the right-hand side of (\ref{equation:nodd}).  By Lemma \ref{lemma:use1psi1} and Corollary \ref{theorem:H5Thm1.5}, the last quotient of the right-hand side of  (\ref{equation:nodd}) equals
\begin{align*}
\sum_{t,u}&\textup{sg}(t,u)q^{p^2(2n+p)tu}\Big ((-1)^pq^{p(2n+p)r+p(n+p)/2}y^{n+p}x^{-n}\Big )^t\Big ((-1)^pq^{p(2n+p)s+p(n+p)/2}x^{n+p}y^{-n}\Big )^u\\
&=\sum_{t,u}\textup{sg}(t,u)(-x)^{(n+p)u-nt}(-y)^{(n+p)t-nu}q^{p^2(2n+p)tu+p(2n+p)(rt+su)+(t+u)p(n+p)/2}.
\end{align*}
We also have that
\begin{equation*}
j(-q^{np(s-r)}x^n/y^n;q^{np^2})=\sum_{v}q^{np^2\binom{v}{2}}(q^{np(s-r)}x^{n}y^{-n})^v
=\sum_{v}q^{np^2\binom{v}{2}+np(s-r)v}x^{nv}y^{-nv},
\end{equation*}
so the right-hand side of (\ref{equation:nodd}) equals
\begin{align}
\sum_{r=0}^{p-1}\sum_{s=0}^{p-1}&\sum_{t,u,v}\textup{sg}(t,u)(-x)^{r+(n+p)u-nt+nv-(n-1)/2}(-y)^{s+(n+p)t-nu-nv+(n+1)/2}\label{equation:rh1}\\
&\cdot q^{n\binom{r-(n-1)/2}{2}+(n+p)\big (r-(n-1)/2\big )\big (s+(n+1)/2\big )+n\binom{s+(n+1)/2}{2}+np^2\binom{v}{2}+np(s-r)v} \notag\\
& \ \ \ \ \ \cdot q^{p^2(2n+p)tu+p(2n+p)(rt+su)+p(t+u)(n+p)/2}.\notag
\end{align}
Now we apply the change of variables
$r=R-pu$, $s=S-pt,$ $v=t-u-w.$  This allows us to remove the sums over $r$ and $s$.  The general term (\ref{equation:rh1}) then becomes
\begin{align}
\sum_{R,S,w}&\textup{sg}(R,S)(-x)^{R-nw-(n-1)/2}(-y)^{S+nw+(n+1)/2}\label{equation:rh2}\\
&\cdot q^{n\binom{R-nw-(n-1)/2}{2}+(n+p)(R-nw-(n-1)/2)(S+nw+(n+1)/2)+n\binom{S+nw+(n+1)/2}{2}+np(2n+p)\binom{w+1}{2}}. \notag
\end{align}
We make one more change of variables:  $R=r+nw+(n-1)/2$, $S=s-nw-(n+1)/2$.  The right-hand side of (\ref{equation:nodd}) then equals
\begin{align}
\sum_{r,s}&(-x)^{r}(-y)^{s} \label{equation:rh3}\\
&\cdot \sum_{w}\textup{sg}(r+nw+(n-1)/2,s-nw-(n+1)/2)q^{n\binom{r}{2}+(n+p)rs+n\binom{s}{2}+np(2n+p)\binom{w+1}{2}}.\notag
\end{align}
We break (\ref{equation:rh3}) up into two parts:
\begin{align}
\tfrac12 \sum_{r,s}&(-x)^{r}(-y)^{s}\sum_{w}\textup{sg}(r+nw+(n-1)/2)q^{n\binom{r}{2}+(n+p)rs+n\binom{s}{2}+np(2n+p)\binom{w+1}{2}} \label{equation:rh4}\\
&+\tfrac12\sum_{r,s}(-x)^{r}(-y)^{s}\sum_{w}\textup{sg}(s-nw-(n+1)/2)q^{n\binom{r}{2}+(n+p)rs+n\binom{s}{2}+np(2n+p)\binom{w+1}{2}}.\notag
\end{align}
In the first line of (\ref{equation:rh4}), we rewrite $r$ as $rn+k$ where $0\le k\le n-1$.  In the second line of (\ref{equation:rh4}), we do the same for $s$.  For the final form of the right-hand side of (\ref{equation:nodd}), we then have
\begin{align}
\tfrac12 \sum_{k=0}^{n-1}&\sum_{r,s}(-x)^{nr+k}(-y)^{s}\label{equation:rh5} \\
& \ \ \ \ \ \ \ \ \cdot \sum_{w}\textup{sg}(nr+k+nw+(n-1)/2)q^{n\binom{nr+k}{2}+(n+p)(nr+k)s+n\binom{s}{2}+np(2n+p)\binom{w+1}{2}} \notag\\
&+\tfrac12\sum_{k=0}^{n-1}\sum_{r,s}(-x)^{r}(-y)^{ns+k} \notag \\ 
& \ \ \ \ \ \ \ \ \cdot \sum_{w}\textup{sg}(ns+k-nw-(n+1)/2)q^{n\binom{r}{2}+(n+p)r(ns+k)+n\binom{ns+k}{2}+np(2n+p)\binom{w+1}{2}}.\notag
\end{align}

We rewrite the left-hand side of (\ref{equation:nodd}).  The first term on the left-hand side of (\ref{equation:nodd}) equals
\begin{align}
\overline{J}_{0,np(2n+p)}&f_{n,n+p,n}(x,y,q)
=\sum_{w}q^{np(2n+p)\binom{w+1}{2}}\cdot \sum_{r,s}\textup{sg}(r,s)(-x)^r(-y)^sq^{n\binom{r}{s}+(n+p)rs+n\binom{s}{2}}\notag\\
&=\sum_{r,s}(-x)^r(-y)^s\sum_{w}\textup{sg}(r,s)q^{n\binom{r}{s}+(n+p)rs+n\binom{s}{2}+np(2n+p)\binom{w+1}{2}}.\label{equation:lh1}
\end{align}
The second term on the left-hand side of (\ref{equation:nodd}) equals
{\allowdisplaybreaks \begin{align}
-&\overline{J}_{0,np(2n+p)}g_{n,n+p,n}(x,y,q)\notag\\
&=- \overline{J}_{0,np(2n+p)}\label{equation:lh2A}\\
& \ \ \ \ \ \ \cdot \Big ( \sum_{k=0}^{n-1}(-x)^kq^{n\binom{k}{2}}j(q^{(n+p)k}y;q^n)m\big (-q^{n(np+\binom{p+1}{2})-kp(2n+p)}\frac{(-x)^n}{(-y)^{n+p}},q^{np(2n+p)},-1\big)\notag\\
& \ \ \ \ \ \ \  + \sum_{k=0}^{n-1}(-y)^kq^{n\binom{k}{2}}j(q^{(n+p)k}x;q^n)m\big (-q^{n(np+\binom{p+1}{2})-kp(2n+p)}\frac{(-y)^n}{(-x)^{n+p}},q^{np(2n+p)},-1\big ) \Big  ).\notag
\end{align}}

\noindent To rewrite (\ref{equation:lh2A}) we use Lemmas \ref{lemma:mxqz-expansion-conditions} and \ref{lemma:mxqz-expansionA} for $0\le k \le (n-1)/2$ as well as  Lemmas \ref{lemma:mxqz-expansion-conditions} and \ref{lemma:mxqz-expansionB} for $(n+1)/2\le k \le n-1.$  Thus (\ref{equation:lh2A}) becomes
\begin{align}
\sum_{k=0}^{(n-1)/{2}}& \sum_{v,s,t}\textup{sg}(v,s)(-x)^{k+ns}(-y)^{t-s(n+p)}q^{n\binom{k}{2}+n\binom{t}{2}+(n+p)kt+n(np+\binom{p+1}{2})s-kp(2n+p)s}\label{equation:lh2B}\\
&   \ \ \ \ \ \cdot q^{np(2n+p)(\binom{v+1}{2}+vs)}\notag\\
&   +\sum_{k=(n+1)/{2}}^{n-1} \ \ \sum_{v,s,t}\textup{sg}(v,s)(-x)^{k+ns}(-y)^{t-s(n+p)}q^{n\binom{k}{2}+n\binom{t}{2}+(n+p)kt+n(np+\binom{p+1}{2})s-kp(2n+p)s}\notag\\
& \ \ \ \ \ \cdot q^{np(2n+p)(\binom{v+1}{2}+(v+1)(s+1))}\notag\\
+&\sum_{k=0}^{(n-1)/{2}} \sum_{v,s,t}\textup{sg}(v,s)(-y)^{k+ns}(-x)^{t-s(n+p)}q^{n\binom{k}{2}+n\binom{t}{2}+(n+p)kt+n(np+\binom{p+1}{2})s-kp(2n+p)s}\notag\\
& \ \ \ \ \ \cdot q^{np(2n+p)(\binom{v+1}{2}+vs)}\notag\\
&  +\sum_{k=(n+1)/{2}}^{n-1}\ \  \sum_{v,s,t}\textup{sg}(v,s)(-y)^{k+ns}(-x)^{t-s(n+p)}q^{n\binom{k}{2}+n\binom{t}{2}+(n+p)kt+n(np+\binom{p+1}{2})s-kp(2n+p)s}\notag\\
& \ \ \ \ \ \cdot q^{np(2n+p)(\binom{v+1}{2}+(v+1)(s+1))}.\notag
\end{align}
Substitute $t=(n+p)s+r$ and rewrite the exponent of $q$ to have
{\allowdisplaybreaks \begin{align}
\sum_{k=0}^{(n-1)/{2}}& \sum_{v,s,r}\textup{sg}(v,s)(-x)^{k+ns}(-y)^{r}q^{n\binom{r}{2}+(n+p)r(ns+k)+n\binom{ns+k}{2}+np(2n+p)\binom{s+v+1}{2}}\label{equation:lh2C}\\
&   +\sum_{k=({n+1})/{2}}^{n-1}\ \  \sum_{v,s,r}\textup{sg}(v,s)(-x)^{k+ns}(-y)^{r}q^{n\binom{r}{2}+(n+p)r(ns+k)+n\binom{ns+k}{2}+np(2n+p)\binom{s+v+2}{2}}\notag\\
+&\sum_{k=0}^{({n-1})/{2}} \sum_{v,s,r}\textup{sg}(v,s)(-y)^{k+ns}(-x)^{r}q^{n\binom{r}{2}+(n+p)r(ns+k)+n\binom{ns+k}{2}+np(2n+p)\binom{s+v+1}{2}}\notag\\
&   +\sum_{k=({n+1})/{2}}^{n-1} \ \ \sum_{v,s,r}\textup{sg}(v,s)(-y)^{k+ns}(-x)^{r}q^{n\binom{r}{2}+(n+p)r(ns+k)+n\binom{ns+k}{2}+np(2n+p)\binom{s+v+2}{2}}.\notag
\end{align}}
Swapping $r$ and $s$ in the first two sums yields
{\allowdisplaybreaks \begin{align}
\sum_{k=0}^{({n-1})/{2}}& \sum_{v,s,r}\textup{sg}(v,r)(-x)^{k+nr}(-y)^{s}q^{n\binom{s}{2}+(n+p)s(nr+k)+n\binom{nr+k}{2}+np(2n+p)\binom{r+v+1}{2}}\label{equation:lh2D}\\
&   +\sum_{k=({n+1})/{2}}^{n-1} \ \ \sum_{v,s,r}\textup{sg}(v,r)(-x)^{k+nr}(-y)^{s}q^{n\binom{s}{2}+(n+p)s(nr+k)+n\binom{nr+k}{2}+np(2n+p)\binom{r+v+2}{2}}\notag\\
+&\sum_{k=0}^{({n-1})/{2}} \sum_{v,s,r}\textup{sg}(v,s)(-y)^{k+ns}(-x)^{r}q^{n\binom{r}{2}+(n+p)r(ns+k)+n\binom{ns+k}{2}+np(2n+p)\binom{s+v+1}{2}}\notag\\
&   +\sum_{k=({n+1})/{2}}^{n-1} \ \ \sum_{v,s,r}\textup{sg}(v,s)(-y)^{k+ns}(-x)^{r}q^{n\binom{r}{2}+(n+p)r(ns+k)+n\binom{ns+k}{2}+np(2n+p)\binom{s+v+2}{2}}.\notag
\end{align}}%
In the four sums, we substitute $v=-w-1-r$, $v=-w-2-r$, $v=w-s$, and $v=w-1-s$ respectively to produce
{\allowdisplaybreaks \begin{align}
&\sum_{k=0}^{({n-1})/{2}} \sum_{w,s,r}\textup{sg}(-w-1-r,r)(-x)^{k+nr}(-y)^{s}q^{n\binom{s}{2}+(n+p)s(nr+k)+n\binom{nr+k}{2}+np(2n+p)\binom{w+1}{2}}\notag\\
&   +\sum_{k=({n+1})/{2}}^{n-1} \ \ \sum_{w,s,r}\textup{sg}(-w-2-r,r)(-x)^{k+nr}(-y)^{s}q^{n\binom{s}{2}+(n+p)s(nr+k)+n\binom{nr+k}{2}+np(2n+p)\binom{w+1}{2}}\notag\\
+&\sum_{k=0}^{({n-1})/{2}} \sum_{w,s,r}\textup{sg}(w-s,s)(-y)^{k+ns}(-x)^{r}q^{n\binom{r}{2}+(n+p)r(ns+k)+n\binom{ns+k}{2}+np(2n+p)\binom{w+1}{2}}\label{equation:lh2E}\\
&   +\sum_{k=({n+1})/{2}}^{n-1} \ \ \sum_{w,s,r}\textup{sg}(w-1-s,s)(-y)^{k+ns}(-x)^{r}q^{n\binom{r}{2}+(n+p)r(ns+k)+n\binom{ns+k}{2}+np(2n+p)\binom{w+1}{2}}.\notag
\end{align}}
 Using Lemma \ref{lemma:sign-ids} then yields (\ref{equation:nodd}).


\section*{Acknowledgements}

We would like to thank Dean Hickerson for helpful comments and suggestions.  We would also like to thank the referee for a careful reading of the manuscript as well as comments and suggestions on the exposition.

\end{document}